\crefname{lemma}{Lemma}{Lemmas}
\crefname{corollary}{Corollary}{Corollaries}
\crefname{theorem}{Theorem}{Theorems}
\crefname{equation}{Equation}{Equations}
\crefname{example}{Example}{Examples}
\crefname{section}{Section}{Sections}
\crefname{subsection}{Section}{Sections}
\newcommand\rest[1][]{\EM{\upharpoonleft_{#1}}}
\newcommand{\defn}[1]{{\bf{#1}}}
\def\Powerset{\mathcal{P}}
\def\PowersetFin{\Powerset_{<\w}}
\DeclareMathOperator{\ORD}{ORD}
\DeclareDocumentCommand{\given}{}{\EM{\, | \,}}
\DeclareMathOperator{\Rel}{Rel}
\DeclareMathOperator{\arity}{ar}
\DeclareDocumentCommand{\SeqSize}{d[] d()}
{
\EM{#1^{(#2)}}
}
\DeclareMathOperator{\SubStrop}{Sub}
\DeclareDocumentCommand{\SubStr}{d[] d()}
{
\IfNoValueTF{#1}
	{
        \EM{\SubStrop(#2)}
	}
	{
        \EM{\SubStrop_{#1}(#2)}
	}
}
\DeclareMathOperator{\SFop}{SF}
\DeclareDocumentCommand{\SF}{d[]}
{
\IfNoValueTF{#1}
	{
	\EM{\SFop}
	}
	{
	\EM{\SFop_{#1}}
	}
}
\DeclareDocumentCommand{\dual}{d()}
{
\IfNoValueTF{#1}
	{
	\EM{\hat{\ }}
	}
	{
	\EM{\hat{#1}}
	}
}
\DeclareDocumentCommand{\Rel}{d[]}
{
\IfNoValueTF{#1}
	{
	\EM{\mathcal{R}}
	}
	{
	\EM{\mathcal{R}_{#1}}
	}
}
\DeclareDocumentCommand{\Func}{d[]}
{
\IfNoValueTF{#1}
	{
	\EM{\mathcal{F}}
	}
	{
	\EM{\mathcal{F}_{#1}}
	}
}
\DeclareDocumentCommand{\ar}{d[]}
{
\IfNoValueTF{#1}
	{
	\EM{\arity}
	}
	{
	\EM{\arity_{#1}}
	}
}
\DeclareDocumentCommand{\Fn}{d<> d[] d()}
{
\IfNoValueTF{#3}
	{
	\EM{\textrm{Fn}(#1, #2)}
	}
	{
	\EM{\textrm{Fn}(#1, #2, #3)}
	}
}
\DeclareDocumentCommand{\AllColors}{d<> d[] d()}
{
\IfNoValueTF{#3}
	{
	\EM{\mathfrak{AC}_{#1}^{#2}}
	}
	{
	\EM{\mathfrak{AC}_{#1}^{#2}(#3)}
	}
}
\DeclareDocumentCommand{\AllSetsColors}{d<> d[] d()}
{
\IfNoValueTF{#3}
	{
	\EM{\mathfrak{ASC}_{#1}^{#2}}
	}
	{
	\EM{\mathfrak{ASC}_{#1}^{#2}(#3)}
	}
}
\DeclareDocumentCommand{\GoodEdges}{d<> d[] d()}
{
\IfNoValueTF{#3}
	{
	\EM{\mathfrak{GE}_{#1}^{#2}}
	}
	{
	\EM{\mathfrak{GE}_{#1}^{#2}(#3)}
	}
}
\DeclareDocumentCommand{\Middle}{d~~ d<> d[] d()}
{
\IfNoValueTF{#4}
	{
	\EM{\textrm{Mid}_{#3}}
	}
	{
	\EM{\textrm{Mid}_{#3}(#4)}
	}
}
\DeclareMathOperator{\Sym}{Sym}
\DeclareDocumentCommand{\Perm}{d()}
{
\EM{\Sym(#1)}
}
\DeclareDocumentCommand{\Pushforward}{d[] d()}
{
\IfNoValueTF{#2}
{
    \EM{{#1}^*}
}
{
    \EM{{#1}^*(#2)}
}
}
\DeclareDocumentCommand{\Neigh}{d<> d[] d()}
{
    \EM{\approx^{#1}_{#2:#3}}
}
\newcommand{\rainbowsymbol}{%
    \begin{tikzpicture}[scale=0.3]
        \draw[line width=1.5pt, black] (0,-0.2) arc (0:180:0.4);
    \end{tikzpicture}%
}
\DeclareMathOperator{\graphop}{Gra}
\DeclareMathOperator{\Rainbowop}{\rainbowsymbol}
\DeclareMathOperator{\RainbowThresholdop}{\rainbowsymbol}
\DeclareDocumentCommand{\RainSeqSupScript}{d[] d()}
{
\IfNoValueTF{#2}
{
    \EM{\textrm{Seq}^{\!\!\!\!\!\Rainbowop}_{#1}}
}
{
    \EM{\textrm{Seq}^{\!\!\!\!\!\Rainbowop}_{#1}(#2)}
}
}
\DeclareDocumentCommand{\RainFont}{m}{\mathbb{#1}}
\DeclareDocumentCommand{\RainSeq}{d[] d()}
{
\IfNoValueTF{#2}
{
    \EM{\RainFont{S}_{#1}}
}
{
    \EM{\RainFont{S}_{#1}(#2)}
}
}
\DeclareDocumentCommand{\RainGraphSupScript}{d[] d()}
{
\IfNoValueTF{#2}
{
    \EM{\textrm{G}^{\!\!\!\!\!\!\RainbowThresholdop}_{#1}}
}
{
    \EM{\textrm{G}^{\!\!\!\!\!\!\RainbowThresholdop}_{#1}(#2)}
}
}
\DeclareDocumentCommand{\RainGraph}{d[] d()}
{
\IfNoValueTF{#2}
{
    \EM{\RainFont{G}_{#1}}
}
{
    \EM{\RainFont{G}_{#1}(#2)}
}
}
\DeclareDocumentCommand{\SeqToGraph}{}
{
    \EM{\iota^{\!\!\!\!\Rainbowop}}
}
\DeclareDocumentCommand{\Graphs}{d()}
{
\IfNoValueTF{#1}
{
    \EM{\graphop}
}
{
    \EM{\graphop(#1)}
}
}
\DeclareDocumentCommand{\floor}{m}
{
\lfloor #1 \rfloor
}
\DeclareDocumentCommand{\bbfloor}{m}
{
\bigg\lfloor #1 \bigg\rfloor
}
\newcommand{\dotminus}{\mathbin{\text{\@dotminus}}}
\newcommand{\@dotminus}{%
  \ooalign{\hidewidth\raise1ex\hbox{.}\hidewidth\cr$\m@th-$\cr}%
}
\DeclareDocumentCommand{\quadiff}{}
{
    \EM{\quad\text{ if and only if }\quad}
}
\DeclareDocumentCommand{\SubStr}{d[] d()}
{
\IfNoValueTF{#1}
	{
        \EM{\SubStrop(#2)}
	}
	{
        \EM{\SubStrop_{#1}(#2)}
	}
}
\def\cG{{\EM{\mathcal{G}}}}
\def\cH{{\EM{\mathcal{H}}}}
\def\cU{{\EM{\mathcal{U}}}}
\def\cX{{\EM{\mathcal{X}}}}
\def\w{\EM{\omega}}
\def\^{\EM{{}^{\And}}}
\def\And{\EM{\wedge}}
\def\<{\EM{\langle}}
\def\>{\EM{\rangle}}
\def\EM#1{\ensuremath{#1}}
\def\st{\,:\,}
\def\:{\colon}
\providecommand{\dotdiv}{
  \mathbin{
    \vphantom{+}
    \text{
      \mathsurround=0pt 
      \ooalign{
        \noalign{\kern-.35ex}
        \hidewidth$\smash{\cdot}$\hidewidth\cr 
        \noalign{\kern.35ex}
        $-$\cr 
      }%
    }%
  }%
}
\DeclareDocumentCommand{\RightJustify}{m}{\hspace*{\fill}\mbox{#1}\penalty-9999\relax}
\newcounter{margincounter}
\DeclareDocumentCommand{\displaycounter}{}
	{{\arabic{margincounter}}}
\DeclareDocumentCommand{\incdisplaycounter}{}
	{{\stepcounter{margincounter}\arabic{margincounter}}}
\DeclareDocumentCommand{\DeclareComment}{m m m o d()}{%
%
%
%
%
%
%

\expandafter\DeclareDocumentCommand\csname Hide#1\endcsname {}
	{%
	\expandafter\DeclareDocumentCommand\csname #1\endcsname {+m} {}
	
	\expandafter\DeclareDocumentCommand\csname f#1\endcsname {+m} {}

	\expandafter\DeclareDocumentEnvironment{e#1} {} {} {}
	}

\expandafter\DeclareDocumentCommand\csname Show#1\endcsname {}
	{
	\expandafter\DeclareDocumentCommand\csname #1\endcsname {+m}
		{%
		\textcolor{#2}
			{ 
			{\tiny \textbf{(#3)}}
			\IfValueT{#5}
				{%
				#5
				}
			####1
			}
		}

	\expandafter\DeclareDocumentCommand\csname f#1\endcsname {+m}
		{%
		\IfValueTF{#4}
			{
			\textcolor{#2}
			{\text{$\,^{(\incdisplaycounter{#4})}$}}
			\marginpar{\tiny\textcolor{#2}{
				{\text{\tiny $(\displaycounter{#4})$}}
				\text{\IfValueT{#5}{#5}
				####1}}}
			}
			{
			\textcolor{#2}
			{$\,^{(\incdisplaycounter)}$}
			\marginpar{\tiny\textcolor{#2}{
				{\tiny $(\displaycounter)$}
				\text{\IfValueT{#5}{#5}
				####1}}}
			}
		}

	\expandafter\DeclareDocumentEnvironment{e#1} {}
		{
		\textcolor{#2}
		\bgroup
		\IfValueT{#5}
			{%
			#5
			}
		}
		{
		\egroup
		}
	}

\csname Show#1\endcsname

}
\definecolor{NAColor}{rgb}{1.0,0.0,0.0}
\definecolor{ProblemColor}{rgb}{0.7,0.1,0.7}
\definecolor{TBDColor}{rgb}{0.0,0.0,0.8}
\definecolor{MathColor}{rgb}{0.0,0.4,0.1}
\definecolor{NateColor}{rgb}{0.0,0.5,1.0}
\definecolor{MostafaColor}{rgb}{1.0,0.0,1.0}
\definecolor{RefColor}{rgb}{1.0,0.0,1.0}
\definecolor{LaterColor}{rgb}{1.0,0.0,1.0}
\DeclareDocumentCommand{\DeclareCounter}{m}%
		\newcounter{#1}%
\DeclareDocumentCommand{\MyQED}{}{\qed}
\noindent\IfNoValueTF{#1}
{\emph{Proof.\!\!}}
{\emph{Proof\ #1.\ }}
\DeclareDocumentCommand{\ProofLabel}{}{%
%
\addtocounter{ProofLabelcOUntEr}{1}
\label{cUrrEntProoflAbEl\arabic{ProofLabelcOUntEr}}
}
\DeclareDocumentCommand{\ProofRef}{D<>{1}}
{%
\ref{cUrrEntProoflAbEl\arabic{ProofcOUntEr#1}}
}
\DeclareDocumentCommand{\ProofCref}{D<>{1}}
{%
\cref{cUrrEntProoflAbEl\arabic{ProofcOUntEr#1}}
}
\def\TheoremDepth{section}
\DeclareDocumentCommand{\DeclareTheorem}{m o m o}{%
%
%
%
%

\IfNoValueTF{#4}
	{%
	\IfNoValueTF{#2}
		{%
		\newtheorem{#1vArIAblE}{#3}
		}
		{%
		\newtheorem{#1vArIAblE}[#2vArIAblE]{#3}
		}
	}
	{%
	\newtheorem{#1vArIAblE}{#3}[#4]%
	}
\newtheorem*{#1vArIAblE*}{#3}

\DeclareDocumentEnvironment{#1}{o o}

	{
	\IfValueT{##2}%
		{
		\begin{spacing}{##2}
		}
	\IfValueTF{##1}
		{
		\begin{#1vArIAblE}[##1]
		}
		{
		\begin{#1vArIAblE}
		}
%
	\ProofLabel
	}
	{
	\IfValueT{##2}%
		{
		\end{spacing}{##2}
		}
	\end{#1vArIAblE}
	}

\DeclareDocumentEnvironment{#1*}{o o}

	{
	\IfValueT{##2}%
		{
		\begin{spacing}{##2}
		}
	\IfValueTF{##1}
		{
		\begin{#1vArIAblE*}[##1]
		}
		{
		\begin{#1vArIAblE*}
		}
	}
	{
	\IfValueT{##2}%
		{
		\end{spacing}{##2}
		}
	\end{#1vArIAblE*}
	}
}
\theoremstyle{plain}
\theoremstyle{definition}
\theoremstyle{remark}
\begin{document}

\title{Rainbow Threshold Graphs}

\begin{abstract}
We define a generalization of \emph{threshold graphs} which we call $k$-rainbow threshold graphs. We show that the collection of $k$-rainbow threshold graphs do not satisfy the $0$-$1$ law for first order logic and that asymptotically almost surely all $(k+1)$-rainbow threshold graphs are not isomorphic to a $k$-rainbow threshold graph. 
\end{abstract}

\author{Nathanael Ackerman}
\address{Harvard University,
Cambridge, MA 02138, USA}
\email{nate@aleph0.net}

\author{Mostafa Mirabi}
\address{The Taft School, Watertown, CT 06795, USA}
\email{mmirabi@wesleyan.edu}

\subjclass[2020]{05C62, 03C13, 05C30}
%
%

\keywords{Threshold Graph, Asymptotic Almost Surely, 0-1 Law}

\maketitle

\section{Introduction}

A threshold graph is one which can be obtained by starting with an isolated vertex and in sequence adding either a new isolated vertex or a vertex which dominates all vertexes already constructed. The collection of threshold graphs were introduced by Chv\`{a}tal and Hammer (\cite{MR479384}) and Henderson and Zalcstein (\cite{MR488948}) in 1977. Due to the numerous different representations of threshold graphs and their simplicity of construction, threshold graphs have found applications in a wide range of areas of computer science and graph theory. See, for example, \cite{MR1417258} for more details. 

When considering a graph process whereby vertexes are added one at a time, we can imagine an agent which at each stage is handed a new vertex and the goes through all the previous vertices to decide whether or to connect it to the new vertex. In the case of threshold graphs  this agent is unable to distinguish between previous vertices when making his decision, and hence has to make the same decision for each previous vertex. 


A natural generalization of this construction is where the agent is able to mark each vertex it is given with one of $k$-colors. Then when future vertices are added it is able to use the color of the previous vertices in make its decision on whether or not to connect to the new vertex (i.e. each new vertex treats all previous vertices of the same color the same). This construction gives rise to a \emph{$k$-rainbow threshold graph}.


In this paper we will show that the collection of $k$-rainbow threshold graphs, for different values of $k$, provide a stratification of all graphs. We will also show a that asymptotically almost surely a $k+1$-rainbow threshold graph is not isomorphic to a $k$-rainbow threshold graph. This tells us that by allowing our agents access to more colors there are new graphs which can be constructed. We will also show that for no $k$ does the collection of $k$-rainbow threshold graphs satisfy the $0$-$1$ law for first order logic.

\subsection{Related Work}

Due to the numerous equivalent definitions of threshold graphs, there have been many different generalizations of threshold graphs. For a survey of some of these see \cite{MR1686154} Chapter 14. 

While we show that the collection of $k$-rainbow threshold graphs (and hence the collection of threshold graphs) do not satisfy the $0$-$1$ law for first order logic, we can still ask about other limiting notions. In particular the limits of threshold graphs, in the sense of graphons, is studied in \cite{MR2573956}.

\subsection{Notation}

We let $\ORD$ be the collection of ordinals. For $k \in \w$ we let $[k] = \{0, 1, \dots, k-1\}$. We let $[k, n) = \{k, k+1, \dots, n-1\}$. When no confusion can arise we will also use $k$ for $[k]$. If $X$ is a set we let $\Powerset(X)$ denote the collection of subsets of $X$ and $\PowersetFin(X)$ the collection of finite subsets of $X$.  


\section{Rainbow Threshold Graphs}

We now introduce the notion of a $k$-rainbow threshold graph and give some of their basic properties. 

\begin{definition}
Suppose $K, X$ are sets. By a \defn{$K$-rainbow sequence} on $X$ we mean a pair $(a,e)$ where both $a$ and $e$ have domain $X$, $a$ has codomain $K$ and $e$ has codomain $\Powerset(K)$. We call $a$ the \defn{colors} and $e$ the \defn{sets of colors}. 

We let $\RainSeq[K](X)$ be the collection of $K$-rainbow sequences on $X$. For $k, n \in \w$ we let $\RainSeq[k](n) = \RainSeq[{[k]}]({[n]})$. 

For $(a_0, e_0), (a_1, e_1) \in \RainSeq[k](X)$ we say $(a_0, e_0)$ is \defn{similar} to $(a_1, e_1)$, denoted $(a_0, e_0) \sim (a_0, e_0)$ if there is a permutation of $\tau$ of $[k]$ where for all $x \in X$ we have $a_0(x) = \tau(a_1(x))$ and $e_0(x) = \tau``[e_1(x)]$. 
\end{definition}

Intuitively two $K$-rainbow sequences are similar if we can obtain one from the other by \emph{relabeling} the colors. 

\begin{definition}
Suppose $K$ is a set and $\cX = (X, \leq)$ is a linear ordering and $S = (a, e)$ is a $k$-rainbow sequence on $X$. 

Let $\cG_{S, \leq} = (X, E_S)$ be the graph where 
\[
(\forall x < y\in X)\,(x, y) \in E_S\quadiff a(x) \in e(y).
\]
We say $\cG_{S, \leq}$ is the \defn{$\cX$-rainbow threshold graph} associated with $S$ and $\cX$. We say a graph is a \defn{$k$-rainbow threshold graph} (on $\cX$) if it is the $k$-rainbow graph associated with some $K$-rainbow threshold sequence $S$ and $\cX$.

If $A \subseteq \w$ is finite and $S \in \RainSeq[{[k]}](A)$ then we let $\SeqToGraph(S)$ be the $(A, \leq)$-rainbow threshold graph associated with $S$. 



For $A \in \PowersetFin(\w)$ we let $\RainGraph[k](A) = \SeqToGraph``[\RainSeq[k](A)]$ we say elements of $\RainGraph[k](A)$ are \defn{on $A$}.  We also let $\RainSeq[k]= \bigcup_{A \in \PowersetFin(\w)} \RainSeq[k](A)$. 

\end{definition}

We now give several examples of $k$-rainbow threshold graphs.

\begin{example}
A \defn{threshold graph} on $[n]$ is a graph $\cG$ such that there is a sequence $\<e_i\>_{i \in [n]}$ where $(\forall i \in [n])\, e_i \in \{0, 1\}$ and $(i, j) \in \cG$ if and only if either $i < j$ and $e_j = 1$ or $j < i$ and $e_i = 1$.

Suppose $\<e_i\>_{i \in [n]}$ is a sequence of elemets of $\{0, 1\}$. Let $e^*\:[n] \to \Powerset([1])$ be the map where $e^*(i) = \emptyset$ if and only $e_i = 0$. Let $a^*\:[n] \to [1]$ be the unique such function.

If $\cG$ is a threshold graph on $[n]$ with sequence $\<e_i\>_{i \in [n]}$ then $\SeqToGraph({(a^*, e^*}) = \cG$.  Hence the collection of $1$-rainbow threshold graphs is the same as the collection of threshold graphs. In particular this motivates the use of the term \emph{rainbow threshold graph}.
\end{example}

\begin{example}
A \defn{threshold bigraph} is a bipartite graph $\cH = (H, E)$ where the following hold. 
\begin{itemize}
\item $H = X \cup Y$ where $X$ and $Y$ are the two parts of the bipartite graph. 

\item There is a linear ordering $\leq$ on $X \cup Y$. 

\item There is a function $\alpha\:X \cup Y \to \{0, 1\}$. 

\item For all $x_0, x_1 \in X$ and $(y_0, y_1) \in Y$, $(x_0, x_1), (y_0, y_1) \not \in E$. 

\item For all $z_0,z_1 \in X \cup Y$ with $x_0 < z_1$ and 
\[
z_0 \in X\quadiff z_1 \in Y
\]
we have $(x, y) \in E$ if and only if $\alpha(z_1) = 1$. 
\end{itemize}

Note the graph $\cH$ above is a threshold bigraph is also a $2$-rainbow threshold graph. Specifically let $S =(a, e)$ be the $2$-rainbow sequence where the following hold. 
\begin{itemize}
\item $(\forall x \in H)\, a(x) = 0$ if and only if $x \in X$. 

\item $(\forall x \in H)\, a(x) \not \in e(x)$. 

\item $(\forall x \in H)\, 1-a(x) \in e(x) \leftrightarrow \alpha(x) = 1$. 

\end{itemize}
We then have $\cG_{S, \leq}$, i.e. the threshold graph corresponding to $S$ and $(H, \leq)$ is equal to $\cH$. 
\end{example}

Note unlike with threshold graphs, for $k$-rainbow threshold graphs (with $k > 1$) there are in general multiple $k$-rainbow threshold sequences which give rise to the same graph. In particular the following is immediate. 

\begin{lemma}
\label{Simiarity of rainbow sequences leads to identical graphs}
Suppose $A \subseteq \w$ and $S_0, S_1 \in \RainSeq[k](A)$ with $S_0 \sim S_1$. Then $\SeqToGraph(S_0) = \SeqToGraph(S_1)$.  
\end{lemma}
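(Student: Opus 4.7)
The plan is to unwind the definitions and check that the edge relation is preserved by the similarity permutation. Fix a permutation $\tau$ of $[k]$ witnessing $S_0 = (a_0, e_0) \sim (a_1, e_1) = S_1$, so that $a_0(x) = \tau(a_1(x))$ and $e_0(x) = \tau``[e_1(x)]$ for every $x \in A$. Since both graphs have vertex set $A$, it suffices to show that they have the same edge set.

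Fix $x < y$ in $A$. By the definition of $\SeqToGraph$, the pair $(x,y)$ is an edge of $\SeqToGraph(S_0)$ if and only if $a_0(x) \in e_0(y)$, which by similarity is equivalent to
\[
\tau(a_1(x)) \in \tau``[e_1(y)].
\]
Now the key observation is that $\tau$ is a bijection on $[k]$, so for any $c \in [k]$ and any $D \subseteq [k]$ we have $\tau(c) \in \tau``[D]$ iff $c \in D$. Applying this with $c = a_1(x)$ and $D = e_1(y)$ gives that the above condition is equivalent to $a_1(x) \in e_1(y)$, i.e.\ to $(x,y)$ being an edge of $\SeqToGraph(S_1)$.

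Thus $\SeqToGraph(S_0)$ and $\SeqToGraph(S_1)$ have identical edge sets (and identical vertex sets $A$), so they are equal as graphs. There is no real obstacle here: the lemma is essentially a direct consequence of the fact that $\tau$ is a bijection, together with the symmetric roles played by $a$ and $e$ in the edge criterion $a(x) \in e(y)$.
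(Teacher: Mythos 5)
Your proof is correct and is exactly the argument the paper has in mind; the paper simply declares the lemma ``immediate'' and omits the proof, and your write-up supplies the one relevant observation (that $\tau(c) \in \tau``[D]$ iff $c \in D$ for a bijection $\tau$) needed to make it so.
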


\cref{Simiarity of rainbow sequences leads to identical graphs} is not the only cause for redundancy in the construction of $k$-rainbow threshold graphs. But, as $\RainSeq[k](A)$ is much easier to study than $\RainGraph[k](A)$, understanding these redundancies, and hence the structure of the map $\SeqToGraph$ on $A$ will be important.

Note that $\RainGraph[k]$ is closed under subgraphs. The following is immediate. 
\begin{lemma}
If $S = (a, e) \in \RainSeq[k](X)$ and $X_0 \subseteq X$ then the following hold. 
\begin{itemize}
\item $S \rest[X_0] \in \RainSeq[k](X)$. 

\item For any linear ordering $\leq$ on $X$ with $\leq_0$ the restriction to $X_0$, $\cG_{S_0, \leq_0} = \cG_{S, \leq}\rest[X_0]$. 
\end{itemize}
\end{lemma}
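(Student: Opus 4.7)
The plan is to verify both bullets by directly unpacking the definitions of \emph{$k$-rainbow sequence} and of the graph $\cG_{S,\leq}$; neither bullet requires any nontrivial construction, so the ``proof'' is essentially a bookkeeping exercise.

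For the first bullet, I would observe that if $a\colon X \to [k]$ and $e\colon X \to \Powerset([k])$, then the set-theoretic restrictions $a\rest[X_0]$ and $e\rest[X_0]$ are functions with domain $X_0$, with the same codomains $[k]$ and $\Powerset([k])$ respectively. Thus $S\rest[X_0] := (a\rest[X_0], e\rest[X_0])$ satisfies the definition of a $k$-rainbow sequence on $X_0$, placing it in $\RainSeq[k](X_0)$. (I suspect the statement of the lemma contains a minor typo in writing $\RainSeq[k](X)$ rather than $\RainSeq[k](X_0)$, but in any case the content is the same.)

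For the second bullet, write $S_0 = (a_0, e_0) := S\rest[X_0]$. The underlying vertex sets of both graphs $\cG_{S_0,\leq_0}$ and $\cG_{S,\leq}\rest[X_0]$ are $X_0$, so it suffices to check that they have the same edge relation. Let $x, y \in X_0$. Since $\leq_0$ is the restriction of $\leq$ to $X_0$, we have $x <_0 y$ if and only if $x < y$. In that case, by the defining equivalence of $\cG_{S,\leq}$, the pair $(x,y)$ is an edge of $\cG_{S,\leq}$ iff $a(x) \in e(y)$, which by definition of the restriction is iff $a_0(x) \in e_0(y)$, which in turn is iff $(x,y)$ is an edge of $\cG_{S_0,\leq_0}$. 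This verifies the required equality of graphs.

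There is essentially no obstacle here: the statement is an immediate consequence of the fact that the edge predicate $a(x) \in e(y)$ at a pair $(x,y)$ depends only on the values of $a$ and $e$ at $x$ and $y$ (a locality property), together with the fact that $\leq_0$ is a genuine restriction of $\leq$. Accordingly, I would present the argument in a few lines and mark it as immediate from the definitions.
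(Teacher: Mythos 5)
Your proof is correct and matches the paper's treatment: the paper simply declares this lemma immediate from the definitions, and your verification (restriction of functions stays a rainbow sequence; the edge predicate $a(x)\in e(y)$ is local to the pair, so it commutes with restriction) is exactly the intended argument. Your observation that the first bullet should read $\RainSeq[k](X_0)$ rather than $\RainSeq[k](X)$ is also a correct catch of a typo in the statement.
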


We also have that every graph is a $K$-rainbow threshold graph for some set $K$. 
\begin{proposition}
For every linear ordering $\cX = (X, \leq)$ every graph on $X$ is an $\cX$-rainbow threshold graph. 
\end{proposition}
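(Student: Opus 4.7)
The plan is to use the coarsest possible color set, namely $K = X$ itself, so that each vertex carries its own unique color. Concretely, given a graph $\cG = (X, E)$ on the linearly ordered set $\cX = (X, \leq)$, I would define the $K$-rainbow sequence $S = (a, e)$ on $X$ by setting $a$ to be the identity map $a(x) = x$, and for each $y \in X$ setting
\[
e(y) = \{\, x \in X \st x < y \text{ and } (x, y) \in E \,\}.
\]

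To verify that $\cG_{S, \leq} = \cG$, I would unwind the definitions: for any $x < y$ in $X$, we have $(x, y) \in E_S$ iff $a(x) \in e(y)$ iff $x \in e(y)$, which by construction of $e(y)$ is equivalent to $(x, y) \in E$ (the clause $x < y$ being automatic in this case). So the edges of $\cG_{S, \leq}$ agree with those of $\cG$ on all ordered pairs $x < y$, which determines the graph.

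There is no real obstacle here; the statement is essentially saying that if one is allowed arbitrarily many colors then the construction is trivially universal, since each previously-seen vertex can be individually recognized by its color. The only small point worth noting is that the proposition does not restrict the cardinality of the color set $K$, so taking $|K| = |X|$ (which may be infinite) is permitted, and no finiteness or cardinality bound on $K$ needs to be verified.
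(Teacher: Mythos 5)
Your proposal is correct and matches the paper's own proof: both take $K = X$, let $a$ be the identity, and set $e(y)$ to be the set of earlier neighbors of $y$, so that $a(x) \in e(y)$ exactly records the edges below $y$. The verification you spell out is the same one the paper leaves implicit.
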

\begin{proof}
Suppose $\cG = (X, E)$. For $i \in X$ let $e\:X \to \Powerset(X)$ where $(\forall x \in X)\, e(x) = \{j < i \st (i, j) \in E\}$. Let $a\:X \to X$ be the identity. Then $\cG$ is the $\cX$-rainbow threshold graph associated with $(a, e)$.  
\end{proof}

The following is lemma is immediate. 
\begin{lemma}
If $k \subseteq \ell$ then every $k$-rainbow threshold graph (on $\cX$) is an $\ell$-threshold graph (on $\cX$). 
\end{lemma}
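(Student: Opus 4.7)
The plan is to observe that the inclusion $[k] \hookrightarrow [\ell]$ induces an inclusion of rainbow sequences $\RainSeq[k](X) \hookrightarrow \RainSeq[\ell](X)$ that preserves the associated graph, and this immediately yields the statement.

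In more detail, let $\cG$ be a $k$-rainbow threshold graph on $\cX = (X, \leq)$, so $\cG = \cG_{S, \leq}$ for some $S = (a, e) \in \RainSeq[k](X)$, where $a \colon X \to [k]$ and $e \colon X \to \Powerset([k])$. Since $[k] \subseteq [\ell]$ and hence $\Powerset([k]) \subseteq \Powerset([\ell])$, I would compose with the inclusions to define $a' \colon X \to [\ell]$ by $a'(x) = a(x)$ and $e' \colon X \to \Powerset([\ell])$ by $e'(x) = e(x)$. Then $S' = (a', e') \in \RainSeq[\ell](X)$.

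The key point is that the edge relation of $\cG_{S',\leq}$ is defined by $(x,y) \in E_{S'}$ iff $a'(x) \in e'(y)$ (for $x < y$), and since $a'(x) = a(x)$ and $e'(y) = e(y)$ as elements/subsets of $[\ell]$, we have $a'(x) \in e'(y) \iff a(x) \in e(y)$. Thus $\cG_{S',\leq} = \cG_{S,\leq} = \cG$, exhibiting $\cG$ as an $\ell$-rainbow threshold graph on $\cX$.

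There is essentially no obstacle here; the lemma is just the functoriality of the construction in the color-set parameter $K$, and the entire argument fits in a few lines once the inclusion is made explicit.
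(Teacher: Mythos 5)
Your proof is correct and matches the paper's intent: the paper states this lemma without proof as ``immediate,'' and your argument---viewing a $k$-rainbow sequence as an $\ell$-rainbow sequence via the inclusion $[k] \subseteq [\ell]$ and noting that the edge relation is unchanged---is exactly the routine verification being left to the reader.
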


In particular for any cardinal $\kappa$, the sequence $\<\RainGraph[k](\kappa, \in)\>_{k \in \ORD}$ provide a stratification of the collection of all graphs on $\kappa$.

%



\section{Minimal Colors}

In this section we will show that asymptotically almost surely a uniformly selected $k+1$-rainbow threshold graph is not isomorphic to any $k$-rainbow threshold graph. We will do this by considering the ``neighborhood'' equivalence relation relative to a set $X$. Specifically $i, j \not \in X$ have the same $X$-neighborhood if $(\forall x \in X)\, E(i, x) \leftrightarrow E(j, x)$. We will provide an upper bound, in terms of the size of $X$,  on the number of equivalence classes an $X$-neighborhood relation can have in a $k$-rainbow threshold graph. We will then show that in ``most'' $k+1$-rainbow threshold graphs there is an $X$ such that the number of equivalence classes of the $X$-neighborhood relation is larger then this bound.

\begin{definition}
If $\cG$ is a graph on $G$ and $A, X \subseteq G$.  Let $\Neigh<\cG>[A](X)$ be the equivalence relation on $A$ where for $i, j \in A$ we have $i \Neigh<\cG>[A](X) j$ if and only if 
\begin{itemize}
\item $i = j \in X$, or 

\item $i, j \not \in X$ and $(\forall c \in X)\, E(i, c) \leftrightarrow E(j, c)$. 
\end{itemize}
For $S \in \RainSeq[k](n)$ we let $\Neigh<S>[A](X) = \Neigh<\SeqToGraph(S)>[A](X)$. 
\end{definition}

\begin{definition}
Suppose $X \subseteq[n]$ and $i, j \in [n]$. We say $i \equiv_X j$ if and only if 
\begin{itemize}
\item $i = j \in X$, or 

\item $i, j \not \in X$ and $(\forall c \in X)\, i < x \leftrightarrow j < x$. 
\end{itemize}
\end{definition}

\begin{proposition}
\label{Upper bound on neighborhood equivalence classes}
Suppose $\cG \in \RainGraph[k](n)$ and $X \subseteq [n]$. Then $\Neigh<\cG>[{[n]\setminus X}](X)$ has at most $k \cdot 2^k \cdot (1 + \frac{X}{2})$-many equivalence classes. 
\end{proposition}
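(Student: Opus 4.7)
The plan is to translate the equivalence-class count into a count of distinct $X$-indexed pattern vectors, and then control this count slot by slot. Fix a $k$-rainbow sequence $S = (a, e) \in \RainSeq[k](n)$ with $\SeqToGraph(S) = \cG$, enumerate $X = \{x_1 < \cdots < x_m\}$ where $m = |X|$, and for each $i \in [n] \setminus X$ record its \emph{slot} $s(i) = |\{t : x_t < i\}| \in \{0, \dots, m\}$ together with its \emph{pattern} $p(i) = (E(i, x_1), \dots, E(i, x_m)) \in \{0,1\}^m$. Two vertices outside $X$ are $\Neigh<\cG>[{[n]\setminus X}](X)$-equivalent iff they have the same pattern, so it suffices to bound the number of distinct patterns realized.

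For $i$ in slot $s$, the defining rule for $\SeqToGraph$ says that $p(i)_t$ records whether $a(x_t) \in e(i)$ for $t \leq s$ and whether $a(i) \in e(x_t)$ for $t > s$. Thus $p(i)$ is a function only of the pair $(e(i) \cap C^s,\, [a(i)]_{\sim_s})$, where $C^s := \{a(x_1), \dots, a(x_s)\} \subseteq [k]$ and $\sim_s$ is the equivalence on $[k]$ given by $\kappa \sim_s \kappa'$ iff $\kappa \in e(x_t) \leftrightarrow \kappa' \in e(x_t)$ for every $t > s$. Let $V_s \subseteq \{0,1\}^m$ collect all patterns produced from \emph{some} choice $(\kappa, P) \in [k] \times \Powerset(C^s)$ by this recipe. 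The prefix of length $s$ uniquely recovers $P$ and the suffix of length $m-s$ uniquely recovers $[\kappa]_{\sim_s}$, hence $|V_s| = r_s \cdot 2^{|C^s|} \leq k \cdot 2^k$, where $r_s$ is the number of $\sim_s$-classes; and every realized pattern lies in $\bigcup_{s=0}^m V_s$.

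The crux will be the overlap bound $|V_s \cap V_{s-1}| \geq |V_s|/2$ for each $s \geq 1$. Given $v \in V_s$ arising from $(\kappa, P)$, one asks when $v$ equals a pattern in $V_{s-1}$ arising from some $(\kappa', P')$: comparing coordinates $t \leq s-1$ forces $P' = P \cap C^{s-1}$, comparing coordinates $t > s$ forces $\kappa' \sim_s \kappa$, and coordinate $s$ imposes the single constraint $\kappa' \in e(x_s) \leftrightarrow a(x_s) \in P$. Fix the class $K := [\kappa]_{\sim_s}$: if $e(x_s)$ splits $K$ (meeting it and missing it) then every $P$ yields a solvable system; otherwise $K$ lies entirely inside or entirely outside $e(x_s)$, and solvability reduces to a single-bit condition on $P$ which---since $a(x_s) \in C^s$---holds for exactly half of the $2^{|C^s|}$ subsets $P \subseteq C^s$. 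Either way at least $2^{|C^s|-1}$ patterns per $\sim_s$-class land in $V_{s-1}$, summing to $|V_s|/2$.

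Combining via the telescoping inequality $|\bigcup_s V_s| \leq |V_0| + \sum_{s=1}^m |V_s \setminus V_{s-1}|$ with $|V_0| = r_0 \leq k$ and $|V_s \setminus V_{s-1}| \leq |V_s|/2 \leq k \cdot 2^{k-1}$, the number of equivalence classes is at most $k + m \cdot k \cdot 2^{k-1} \leq k \cdot 2^k \cdot (1 + |X|/2)$. The main obstacle is the overlap lemma---recognizing that moving from slot $s-1$ to slot $s$ changes only the one coordinate $s$ of the pattern, and that this change is consistent with an existing slot-$(s-1)$ pattern for at least half the candidates---since this is precisely what saves the factor of $2$ over the naive $(|X|+1) \cdot k \cdot 2^k$ slot-by-slot bound.
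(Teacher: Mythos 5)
Your proof is correct, and it reaches the bound by a genuinely different decomposition than the paper's. The paper partitions the vertices outside $X$ by their full type: for each $s=(a_s,e_s)\in[k]\times\Powerset([k])$ it considers the cell $Q_s=\{c\notin X : (a(c),e(c))=s\}$, observes that within a cell adjacency to any $x\in X$ depends only on whether the vertex lies above or below $x$ (so the classes inside $Q_s$ are the intervals cut by the ``splitting'' elements of $X$), and then adds the elements of $X$ one at a time. Each new $x$ splits at most one class of $Q_s$, and it splits one only when $[a(x)\in e_s]\neq[a_s\in e(x)]$, which holds for exactly $\tfrac{1}{2}\cdot k\cdot 2^k$ of the cells; summing gives $k\cdot 2^k+|X|\cdot k\cdot 2^{k-1}$. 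You instead partition by position (the slot relative to $X$), bound each per-slot pattern set $V_s$ by $k\cdot 2^k$ via the pair $(e(i)\cap C^s,[a(i)]_{\sim_s})$, and recover the crucial factor of $\tfrac12$ from the overlap lemma $|V_s\cap V_{s-1}|\geq|V_s|/2$ together with the telescoping union bound. The two arguments are roughly dual --- the paper fixes a type and walks across $X$, you fix a cut of $X$ and compare adjacent slots --- and the factor of $2$ has the same combinatorial source in both (a single bit of the form $[a(x_s)\in P]$ versus $[\kappa\in e(x_s)]$ that agrees for half the candidates). The paper's route is a bit shorter because ``at most one class splits per cell per step'' replaces the case analysis in your overlap lemma; your route has the advantage of counting the realized adjacency patterns directly (these literally are the equivalence classes rather than a refinement of them), and it actually yields the marginally sharper bound $k+|X|\cdot k\cdot 2^{k-1}$, of which the stated $k\cdot 2^k\cdot(1+\tfrac{|X|}{2})$ is a weakening.
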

\begin{proof}
Suppose $\cG = \SeqToGraph(S)$ for $S = (a, e) \in \RainSeq[k](n)$.  Fix a set $X \subseteq [n]$ of size $\ell$ and let $(x_i)_{i \in [\ell]}$ be an enumeration of the elements of $X$. For $s \in [k] \times \Powerset([k])$ let $Q_{s} = \{c \in [n] \setminus X \st S(c) = s\}$.  For $Y \subseteq X$ let $r_{s, Y}$ be the number of $\Neigh<\cG>[Q_{s}](Y)$-equivalence classes. Let $t_Y = \sum_{s \in [k] \times \Powerset([k])} r_{s, Y}$

For any $i,  j \in [n] \setminus X$, if $i \equiv_Y j$ and $S(i) = S(j)$ then $i \Neigh<\cG>[{[n] \setminus X}](Y) j$. Therefore there are at most $t_Y$-many $\Neigh<\cG>[{[n] \setminus X}](Y)$-equivalence classes. 

For $i \in [\ell+1]$ let $X_i = \{x_{j}\}_{j \in [i]}$. Note $X_0$ is empty and so $r_{s, X_0} \leq 1$ for all $s \in [k] \times \Powerset([k])$ and $t_{X_0} \leq k \cdot 2^k$. 

Let  $s = (a_s, e_s) \in [k] \times \Powerset([k])$. If $a(x_i) \in e_s \leftrightarrow a_s \in e(x_i)$ then $\Neigh<\cG>[Q_{s}](X_{i+1})$ is the same as $\Neigh<\cG>[Q_{s}](X_{i})$ and hence $r_{s, X_i} = r_{s, X_{i+1}}$. 

But if $a(x_i) \in e_s \leftrightarrow a_s \not \in e(x_i)$ then there is a single $\Neigh<\cG>[Q_{s}](X_{i})$-neighborhood which may consists of two $\Neigh<\cG>[Q_{s}](X_{i+1})$-neighborhoods, and all other $\Neigh<\cG>[Q_{s}](X_{i})$-neighborhoods are also $\Neigh<\cG>[Q_{s}](X_{i+1})$-neighborhoods. Therefore $r_{s, X_{i+1}} \leq r_{s, X_i} +1$. 

However for any $x_i$, $|\{(a_s, e_s) \st a_s \in e(x_i) \leftrightarrow a(x_i) \not \in e_s\}| = \frac{k \cdot 2^k}{2}$. Therefore $t_{X_{i+1}} \leq t_{X_i} + \frac{k \cdot 2^k}{2}$. But then 
\[
t_{X_\ell} \leq k \cdot 2^k + k \cdot 2^k \cdot {\displaystyle\frac{X}{2}} = k \cdot 2^k \cdot \bigg(1 + {\displaystyle\frac{X}{2}}\bigg).
\]
as desired. 
\end{proof}

We now show that for most $\cG \in \RainGraph[k+1](n)$ there is a subset $X \subseteq [n]$ of sufficient size such that the number of $\Neigh<\cG>[{[n]\setminus X}](X)$-equivalence classes is larger than the upper bound given in \cref{Upper bound on neighborhood equivalence classes} for $k$-rainbow threshold graphs. 

First though we need to give conditions which will ensure that we can recover (up to relabeling) the colors and sets of colors of \emph{most} elements in a $k$-rainbow threshold graph.

\begin{definition}
Suppose $A \subseteq X$. Let 
\[
\AllColors<k>[X](A) = \{(a, e) \in \RainSeq[k](X) \st (\forall i \in [k])(\exists x \in A)\, a(x) = i\}.
\]
If $S \in \AllColors<k>[X](A)$ we say $S$ \defn{has all colors on $A$}. We say a graph $\cG$ with underlying set $X$ \defn{has all colors on $A$} if it is in $\SeqToGraph``[\AllColors<k>[X](A)]$. 
\end{definition}

\begin{definition}
Suppose $A \subseteq X$. Let 
\[
\AllSetsColors<k>[X](A) = \{(a, e) \in \RainSeq[k](X) \st (\forall i \in [k])(\exists x, y \in A)\, (i \in e(x)) \leftrightarrow (i \not \in e(y))\}.
\]
If $S \in \AllSetsColors<k>[X](A)$ we say $S$ \defn{separates all colors on $A$}. We say an graph $\cG$ with underlying set $X$ \defn{separates all colors on $A$} if it is in $\SeqToGraph``[\AllSetsColors<k>[X](A)]$. 
\end{definition}

The following two lemmas are immediate but very important for recovering information about the $k$-rainbow threshold sequence used to construct a $k$-threshold graph. 

\begin{lemma}
\label{All colors on the left imply you can determine equivalence of subsets on the right}
Suppose $S = (a, e) \in \AllColors<k>[n](X)$, $i, j \in [n]$ and $(\forall c \in X)\, c \leq i, j$. Then $i \Neigh<S>[n](X) j$ if and only if $e(i) = e(j)$. 
\end{lemma}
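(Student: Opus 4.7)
The plan is to unfold the two definitions in play and then use the \emph{all colors on $X$} hypothesis in exactly one direction. After dispensing with the degenerate case where $i = j$ (both sides then hold trivially), I would assume $i, j \not\in X$ and $i \neq j$, so that every $c \in X$ is strictly less than both $i$ and $j$. In particular, from the definition of $\SeqToGraph(S) = \cG_{S, \leq}$, for each $c \in X$ we have $(c, i) \in E_S \iff a(c) \in e(i)$ and similarly $(c, j) \in E_S \iff a(c) \in e(j)$. Hence the relation $i \Neigh<S>[n](X) j$ reduces to the statement that $a(c) \in e(i) \leftrightarrow a(c) \in e(j)$ for all $c \in X$.

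With this reformulation in hand, the backward direction is immediate: if $e(i) = e(j)$ then membership of $a(c)$ is the same in both sets for any $c$. For the forward direction, I would fix an arbitrary color $m \in [k]$ and show $m \in e(i) \leftrightarrow m \in e(j)$. Here is where the hypothesis $S \in \AllColors<k>[n](X)$ enters: by definition it gives some $c \in X$ with $a(c) = m$, and then the biconditional $a(c) \in e(i) \leftrightarrow a(c) \in e(j)$ is exactly the desired $m \in e(i) \leftrightarrow m \in e(j)$. Since $m$ was arbitrary we conclude $e(i) = e(j)$.

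There is no real obstacle here; the proof is essentially a definition-chase. The only step worth a little care is the reduction to the edge condition on $E_S$ — checking that we are on the correct side of the strict ordering so that the formula $a(c) \in e(i)$ (with the \emph{later} argument supplying the set of colors) applies. Once that is done, the \emph{all colors on $X$} assumption is used only to certify that every color index $m$ is witnessed inside $X$, which is precisely what is needed to lift the pointwise agreement along $a\rest[X]$ to set-equality of $e(i)$ and $e(j)$.
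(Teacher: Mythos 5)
The paper offers no proof of this lemma, labeling it immediate, and your definition-chase is exactly the intended argument: for $i, j \notin X$ lying above all of $X$, the neighborhood relation unwinds to $a(c) \in e(i) \leftrightarrow a(c) \in e(j)$ for all $c \in X$, and the all-colors hypothesis supplies, for each color $m \in [k]$, a witness $c \in X$ with $a(c) = m$, upgrading this pointwise agreement to the set equality $e(i) = e(j)$. Your proof is correct, and you have rightly identified the one point needing care, namely that the edge condition tests the color of the \emph{earlier} vertex against the color set of the \emph{later} one.
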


\begin{lemma}
\label{All sets of colors on the right imply you can determine equivalence of colors on the left}
Suppose $S = (a, e)\in \AllSetsColors<k>[n](X)$ and $(\forall c \in A)\, c \geq i, j$. Then $i \Neigh<S>[n](X) j$ if and only if $a(i) = a(j)$. 
\end{lemma}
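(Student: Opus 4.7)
I would reduce the equivalence relation to a statement about memberships $a(i) \in e(c)$ and $a(j) \in e(c)$ for $c \in X$, and then apply the $\AllSetsColors$ hypothesis. The case $i = j$ is trivial, so assume $i \ne j$. The hypothesis that every $c \in X$ satisfies $c \geq i, j$, combined with $i \ne j$, forces either both $i, j \notin X$, or a boundary case in which one of them lies at $\min X$ and the other strictly below; the boundary case I would handle separately (there $i \not\Neigh<S>[n](X) j$ by definition and one reads off $a(i) \ne a(j)$ from the forthcoming analysis). In the main case $i, j \notin X$, every $c \in X$ satisfies $c > i, j$ strictly, so $E(i,c) \iff a(i) \in e(c)$ and $E(j,c) \iff a(j) \in e(c)$; consequently $i \Neigh<S>[n](X) j$ is equivalent to
\[
(\forall c \in X)\ a(i) \in e(c) \leftrightarrow a(j) \in e(c).
\]

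The $(\Leftarrow)$ direction is then immediate: $a(i) = a(j)$ collapses the two predicates into one, so agreement holds for every $c \in X$. For $(\Rightarrow)$ I argue the contrapositive. Set $\alpha = a(i)$ and $\beta = a(j)$ with $\alpha \ne \beta$, and aim to produce $c \in X$ at which exactly one of $\alpha, \beta$ lies in $e(c)$. The $\AllSetsColors$ hypothesis supplies, for each color $\gamma \in [k]$, witnesses $x_\gamma, y_\gamma \in X$ with $\gamma \in e(x_\gamma)$ and $\gamma \notin e(y_\gamma)$. A two-step case split on the status of $\beta$ at $x_\alpha$ and at $y_\alpha$ yields the desired $c$ immediately unless the configuration $\beta \in e(x_\alpha)$ and $\beta \notin e(y_\alpha)$ holds; that residual configuration is then eliminated by the symmetric analysis applied at $x_\beta, y_\beta$ with respect to $\alpha$.

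The main obstacle is precisely this last step: confirming that the color-indexed characteristic functions $\chi_\alpha, \chi_\beta : X \to \{0,1\}$ defined by $\chi_\gamma(c) = 1 \iff \gamma \in e(c)$ must differ at some $c \in X$ whenever $\alpha \ne \beta$. The plan is to handle it by a finite case split over the combined patterns that $(\chi_\alpha, \chi_\beta)$ realize on the four witnesses $x_\alpha, y_\alpha, x_\beta, y_\beta$, each time either producing the separating $c$ directly or deriving a contradiction with $\alpha \ne \beta$ together with the $\AllSetsColors$ hypothesis (viewed as asserting that every coordinate of the color-characteristic map is non-constant on $X$, whose upgrade to pairwise distinguishability is the crux). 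Everything else in the argument is a formal unfolding of the definitions of $\SeqToGraph$ and $\Neigh<S>[n](X)$.
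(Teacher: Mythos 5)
The paper offers no proof of this lemma: it is declared immediate alongside \cref{All colors on the left imply you can determine equivalence of subsets on the right}, so there is nothing to compare line by line. Your reduction of $i \Neigh<S>[n](X) j$ to the condition $(\forall c \in X)\, a(i) \in e(c) \leftrightarrow a(j) \in e(c)$ is the right first move, and the backward direction is indeed trivial. The genuine gap is in the forward direction, at exactly the spot you flag as ``the crux'': the hypothesis, as literally defined, does \emph{not} upgrade from coordinatewise non-constancy to pairwise distinguishability, and no finite case analysis can close that gap. The definition of $\AllSetsColors<k>[X](A)$ only asserts that for each single color $\gamma$ the truth value of $\gamma \in e(c)$ is non-constant as $c$ ranges over $A$. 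Now take $k \geq 2$ and let $e$ assume only the two values $[k]$ and $\emptyset$ on $X$: every color then has a witness pair, so the hypothesis holds, yet for any two distinct colors $\alpha, \beta$ and every $c \in X$ we have $\alpha \in e(c) \leftrightarrow \beta \in e(c)$, so no separating $c$ exists and the forward implication fails. In particular, your ``residual configuration'' $\beta \in e(x_\alpha)$, $\beta \notin e(y_\alpha)$, $\alpha \in e(x_\beta)$, $\alpha \notin e(y_\beta)$ is realized here (with $x_\alpha = x_\beta$ and $y_\alpha = y_\beta$) and cannot be eliminated; the case split you propose terminates in a consistent, non-separating state rather than a contradiction.

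What this shows is that the lemma is ``immediate'' only under a stronger reading of $\AllSetsColors$ than the one literally written --- for instance, that every subset of $[k]$ (or at least every singleton) occurs as $e(x)$ for some $x \in A$, or that for all distinct $\gamma, \gamma'$ some $x \in A$ has exactly one of them in $e(x)$. That stronger reading is evidently what the authors intend: the notion is named ``separates all colors,'' and in \cref{Bounds on ell-good graphs} the witnessing set is taken of size exactly $2^k$ so that all $2^k$ sets of colors can appear. Under that reading your framework finishes in one line: pick $x \in A$ with $e(x) = \{a(i)\}$; it contains $a(i)$ but not $a(j)$. So the correct repair is not more case analysis but a strengthened (corrected) hypothesis; from the hypothesis as printed, both the lemma and your proposed completion of its proof are false. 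A smaller point: in your boundary case where one of $i, j$ lies in $X$, you cannot ``read off $a(i) \neq a(j)$'' --- nothing forces the colors to differ there --- so that case has to be excluded by reading the hypothesis as $i, j \notin X$ with every element of $X$ strictly above them, which is how the lemma is used later.
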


We now define a collection of $k$-rainbow threshold graphs which for which there is a set $X$ where the $X$-neighborhood relation has close to the maximal possible equivalence classes. 

\begin{definition}
Suppose $k, \ell, n \in \w$. We say a sequence $S \in \RainSeq[k](n)$ is \defn{$\ell$-good} (for $k$) if for every 
\[
\bigg(\forall r \leq \bbfloor{\frac{n}{\ell}} - 3\bigg)(\forall s \in [k] \times \Powerset([k]))\, S^{-1}(s) \cap \big[(r+1) \cdot \ell, (r+2) \cdot \ell\big) \neq \emptyset.
\]

We say a graph $\cG$ is \defn{$\ell$-good} (for $k$) if there is an sequence $S \in \RainSeq[k](n)$ which is $\ell$-good (for $k$) with $\cG = \SeqToGraph(S)$. 
\end{definition}

\begin{lemma}
\label{Bounds on ell-good graphs}
Suppose $k, n, \ell \in \w$ with $\ell \geq 2^k$. Let 
\[
\delta_{k, n}(\ell) = \bbfloor{\frac{n}{\ell}}\cdot k \cdot 2^k \cdot \bigg(1-\dfrac{1}{k \cdot 2^k}\bigg)^\ell.
\]
\begin{itemize}
\item[(a)] There are at most $(k \cdot 2^k)^n \cdot \delta_{k, n}(\ell)$ graphs in $\RainGraph[k](n)$ which are not $\ell$-good (for $k$). 

\item[(b)] There are at least $(k \cdot 2^k)^{n}\cdot \dfrac{1 - \delta_{k, n}(\ell)}{(k \cdot 2^k)^{k + 2^k} \cdot k!}$ graphs in $\RainGraph[k](n)$ which are $\ell$-good (for $k$).
\end{itemize}

\end{lemma}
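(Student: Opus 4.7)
The plan is to prove (a) via a union bound over the failure modes of the $\ell$-good condition, and to derive (b) by combining the resulting lower bound on the number of $\ell$-good sequences with an upper bound on the fibers of $\SeqToGraph$, the latter extracted from \cref{All colors on the left imply you can determine equivalence of subsets on the right} and \cref{All sets of colors on the right imply you can determine equivalence of colors on the left}.

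For part (a), first observe that $|\RainSeq[k](n)| = (k \cdot 2^k)^n$. For each block index $r \leq \lfloor n/\ell \rfloor - 3$ and each $s \in [k] \times \Powerset([k])$, the number of sequences $S$ with $S^{-1}(s) \cap [(r+1)\ell, (r+2)\ell) = \emptyset$ is exactly $(k \cdot 2^k - 1)^\ell (k \cdot 2^k)^{n - \ell} = (k \cdot 2^k)^n (1 - 1/(k \cdot 2^k))^\ell$, since the block positions each avoid $s$ while the remaining $n - \ell$ positions are arbitrary. Summing over the at most $\lfloor n/\ell \rfloor \cdot k \cdot 2^k$ such pairs yields the bound $(k \cdot 2^k)^n \cdot \delta_{k,n}(\ell)$ on non-$\ell$-good sequences. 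A graph $\cG \in \RainGraph[k](n)$ is non-$\ell$-good precisely when every sequence preimage of $\cG$ is non-$\ell$-good, so choosing one preimage per such graph yields an injection into the non-$\ell$-good sequences, giving (a).

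For part (b), the complementary count from (a) gives at least $(k \cdot 2^k)^n (1 - \delta_{k,n}(\ell))$ $\ell$-good sequences; it suffices to bound the number of $\ell$-good sequence preimages of each $\ell$-good graph by $(k \cdot 2^k)^{k + 2^k} \cdot k!$ and divide. Given two such preimages $S = (a, e)$ and $S' = (a', e')$ of the same graph $\cG$, the $\ell$-good hypothesis makes the first block $B_0 = [\ell, 2\ell)$ and the last block $B_{\mathrm{last}} = [(\lfloor n/\ell \rfloor - 2)\ell, (\lfloor n/\ell \rfloor - 1)\ell)$ each realize every element of $[k] \times \Powerset([k])$. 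Applying \cref{All colors on the left imply you can determine equivalence of subsets on the right} with $X = B_0$ forces $e$ and $e'$ to coincide on $[2\ell, n)$ up to a common color relabeling, since the $B_0$-neighborhood equivalence classes are graph-intrinsic and determine the values of $e, e'$ on those positions; symmetrically, \cref{All sets of colors on the right imply you can determine equivalence of colors on the left} with $X = B_{\mathrm{last}}$ pins down $a$ and $a'$ on $[0, (\lfloor n/\ell \rfloor - 2)\ell)$ up to the same relabeling. Verifying that the induced bijection on $\Powerset([k])$ is actually of the form $T \mapsto \tau(T)$ for some $\tau \in \Sym([k])$—and that the same $\tau$ governs both $a \leftrightarrow a'$ and $e \leftrightarrow e'$—uses that $a|_{B_0}$ covers $[k]$ and $e|_{B_{\mathrm{last}}}$ covers $\Powerset([k])$. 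The permutation $\tau$ contributes the factor $k!$.

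The main obstacle is collapsing the residual boundary freedom—the values $e'|_{[0, 2\ell)}$ and $a'|_{[(\lfloor n/\ell \rfloor - 2)\ell, n)}$, a~priori of total size $\Theta(\ell)$—down to the constant $(k \cdot 2^k)^{k + 2^k}$. My plan is to combine the edge constraints of $\cG$ (which, once $\tau$ is fixed, pin down $\tau(c) \in e'(j)$ for every $c \in a([0, j))$) with the $\ell$-good cover condition on $S'$ (which requires every $(c', T') \in [k] \times \Powerset([k])$ to appear in both $B_0$ and $B_{\mathrm{last}}$) to show that, after fixing $\tau$, the only remaining freedom is the value of $S'$ at one representative position per color and per color-set—at most $k + 2^k$ positions in all, each taking a value in $[k] \times \Powerset([k])$ and so contributing a factor of $k \cdot 2^k$. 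Making precise the claim that $\ell$-goodness really does reduce the $\Theta(\ell)$ boundary data to constantly many effective degrees of freedom, by playing the edge-determined bits of $e'(j)$ against the required cover of $\Powerset([k])$ on each $a$-fibre inside $B_0$ (and symmetrically on $B_{\mathrm{last}}$), is the technical heart of the argument.
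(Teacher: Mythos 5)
Your part (a) is correct and is essentially the paper's argument: the paper phrases the same union bound probabilistically (via a uniform random variable on $\RainSeq[k](n)$), and your explicit remark that a non-$\ell$-good graph injects into the non-$\ell$-good sequences by choosing one preimage is a step the paper leaves implicit. The problem is part (b): the fiber bound you reduce to is false. Take $k\geq 2$ and let $S=(a,e)$ be any $\ell$-good sequence whose coloring is constant, say $a\equiv 0$, on the prefix $[0,\ell)$; this is permitted because $\ell$-goodness only constrains the blocks $[(r+1)\ell,(r+2)\ell)$ and says nothing about $[0,\ell)$. For $j\in[1,\ell)$ the value of $e(j)$ on any color $c\notin a([0,j))=\{0\}$ never enters the edge relation, since $(i,j)\in E$ depends only on $e(j)\cap a([0,j))$, and $e(j)$ plays no role in edges $(j,j')$ with $j'>j$. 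Flipping membership of color $1$ in $e(j)$ independently for each $j\in[1,\ell)$ therefore produces $2^{\ell-1}$ distinct sequences, all still $\ell$-good and all mapping under $\SeqToGraph$ to the same graph. Since $\ell$ grows with $n$ in every application of this lemma, this fiber is eventually far larger than the constant $(k\cdot 2^k)^{k+2^k}\cdot k!$, so the "divide by the maximal fiber" step cannot work, and the boundary freedom you flag as the technical heart genuinely cannot be collapsed to constantly many degrees of freedom. (Your recovery of $e$ to the right of $B_0$ and of $a$ to the left of $B_{\mathrm{last}}$ up to a common permutation $\tau$ is fine; it is only the two boundary strips that break the count.)

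The paper sidesteps this by never bounding fibers over all $\ell$-good sequences. It instead fixes one sequence $S^-$ on $[0,k)$ realizing all colors and one sequence $S^+$ on $[n-2^k,n)$ separating all colors, and restricts to the set $A$ of sequences extending both. Within $A$ the boundary values are frozen outright, so \cref{All colors on the left imply you can determine equivalence of subsets on the right} and \cref{All sets of colors on the right imply you can determine equivalence of colors on the left} make $\SeqToGraph$ at most $k!$-to-one on $A$ with $|A|=(k\cdot 2^k)^{n-k-2^k}$; and because $\ell\geq 2^k$ the blocks defining $\ell$-goodness are disjoint from $[0,k)\cup[n-2^k,n)$, so the events $\cU\in A$ and $\cU\in NG_\ell$ are independent and the conditional probability of non-goodness given $A$ is still at most $\delta_{k,n}(\ell)$. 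That yields at least $(1-\delta_{k,n}(\ell))\cdot|A|/k!$ distinct $\ell$-good graphs, which is exactly the bound in (b). If you want to salvage your route, you would have to replace "all $\ell$-good sequences" by a subfamily on which the boundary data is controlled — which is precisely what the paper's set $A$ does.
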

\begin{proof}
For $r \leq \floor{\frac{n}{\ell}} - 3$ and $s \in [k] \times \Powerset([k])$ let 
\[
V_{r, s} = \{S \in \RainSeq[k](n) \st S^{-1}(s)\cap \big[(r+1) \cdot \ell, (r+2) \cdot \ell\big) = \emptyset\}
\]

Let $W_r = \bigcup \{V_{r, s} \st s \in [k] \times \Powerset([k])\}$. And let $NG_{\ell} = \bigcup \{W_r \st r \leq \floor{\frac{n}{\ell}} - 3\}$. Note $NG_\ell$ is the collection of non-$\ell$-good sequences. 

Let $\cU = (a_{\cU}, e_{\cU})$ be a $\RainSeq[k](n)$-valued random variable with uniform distribution. We then have 
\[
\Pr(\cU \in V_{r, s}) = \bigg(\dfrac{k \cdot 2^k-1}{k \cdot 2^k}\bigg)^\ell.
\]
Therefore 
\[
\Pr(\cU \in W_r) \leq k \cdot 2^k \cdot \bigg(\dfrac{k \cdot 2^k-1}{k \cdot 2^k}\bigg)^\ell. 
\]
and
\begin{align*}
\Pr(\cU \in NG_\ell) & \leq \bbfloor{\frac{n}{\ell}} \cdot k \cdot 2^k \cdot \bigg(\dfrac{k \cdot 2^k-1}{k \cdot 2^k}\bigg)^\ell\\
& = \bbfloor{\frac{n}{\ell}}\cdot k \cdot 2^k \cdot \bigg(1-\dfrac{1}{k \cdot 2^k}\bigg)^\ell
\end{align*}
So we have 
\[
|NG_\ell| \leq (k \cdot 2^k)^n \cdot \bbfloor{\frac{n}{\ell}}k \cdot 2^k \cdot \bigg(\dfrac{k \cdot 2^k-1}{k \cdot 2^k}\bigg)^\ell.
\]
Hence (a) holds. 

Let $S^- \in \AllColors<k>[k](k)$ and $S^+ \in \AllSetsColors<k>[{[n-2^k, n)}]({[n-2^k,n)})$. Let 
\[
A = \{S \in \RainSeq[k](n) \st S^-, S^+ \subseteq S\}.
\]
By \cref{All colors on the left imply you can determine equivalence of subsets on the right} and \cref{All sets of colors on the right imply you can determine equivalence of colors on the left} $|\SeqToGraph``[A]| = \frac{(k \cdot 2^k)^{n - k - 2^k}}{k!}$. Note the events $\cU \in A$ and $\cU \in NG_{\ell}$ are independent. Therefore we have 
\[
\Pr(\cU \in NG_{\ell} \given \cU \in A)  %
\leq \bbfloor{\frac{n}{\ell}}\cdot k \cdot 2^k \cdot \bigg(\dfrac{k \cdot 2^k-1}{k \cdot 2^k}\bigg)^\ell. 
\]
But this implies there are at least 
\begin{align*}
|\SeqToGraph``[A]| \cdot (1 - \delta_{k, n}(\ell))
& = \frac{(k \cdot 2^k)^{n - k - 2^k}}{k!} \cdot (1 - \delta_{k, n}(\ell)) \\
& =  (k \cdot 2^k)^{n}\cdot \dfrac{(1 - \delta_{k, n}(\ell))}{(k \cdot 2^k)^{k + 2^k} \cdot k!} 
\end{align*}
many $\ell$-good graphs in $\RainGraph[k](n)$. Hence (b) holds. 
\end{proof}

From \cref{Bounds on ell-good graphs} we therefore have the following bound on the proportion of $k$-rainbow threshold graphs which are not $\ell$-good. 

\begin{corollary}
\label{Bound on fraction of non-ell good graphs}
Suppose $k, n, \ell \in \w$. Let 
\[
\delta_{k, n}(\ell) = \bbfloor{\frac{n}{\ell}}\cdot k \cdot 2^k \cdot \bigg(1-\dfrac{1}{k \cdot 2^k}\bigg)^\ell.
\]
Then 
\[
\dfrac{|\{G \in \RainGraph[k](n)\st G \text{ is not }\ell\text{-good}\}|}{|\RainGraph[k](n)|} \leq \dfrac{\delta_{k, n}(\ell)}{1-\delta_{k, n}(\ell)} \cdot (k \cdot 2^k)^{k + 2^k} \cdot k!.
\]
\end{corollary}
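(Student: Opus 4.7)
The plan is to derive this corollary directly by combining the two counting bounds from \cref{Bounds on ell-good graphs}. The key observation is that the numerator of the ratio we want to bound is precisely what is counted in part (a) of that lemma, while the denominator $|\RainGraph[k](n)|$ is bounded below by the count in part (b), since every $\ell$-good graph is in particular a graph in $\RainGraph[k](n)$.

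First I would invoke part (a) of \cref{Bounds on ell-good graphs} to bound the number of non-$\ell$-good graphs from above by $(k \cdot 2^k)^n \cdot \delta_{k,n}(\ell)$. Next I would invoke part (b) to obtain the lower bound
\[
|\RainGraph[k](n)| \;\geq\; (k \cdot 2^k)^n \cdot \dfrac{1 - \delta_{k, n}(\ell)}{(k \cdot 2^k)^{k + 2^k} \cdot k!},
\]
using that the set of $\ell$-good graphs is a subset of $\RainGraph[k](n)$.

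Then the desired inequality is just a ratio of these two bounds: the $(k \cdot 2^k)^n$ factors cancel, and rearranging gives
\[
\dfrac{|\{G \in \RainGraph[k](n)\st G \text{ is not }\ell\text{-good}\}|}{|\RainGraph[k](n)|} \;\leq\; \dfrac{\delta_{k, n}(\ell)}{1-\delta_{k, n}(\ell)} \cdot (k \cdot 2^k)^{k + 2^k} \cdot k!
\]
as claimed. Since all the real work (the probabilistic estimate via the uniform random variable $\cU$ and the independence argument using \cref{All colors on the left imply you can determine equivalence of subsets on the right} and \cref{All sets of colors on the right imply you can determine equivalence of colors on the left}) was already done in the proof of \cref{Bounds on ell-good graphs}, there is no substantive obstacle here; the corollary is essentially a bookkeeping step, and the only minor care required is to note that the bound is vacuous (and thus still valid) when $\delta_{k,n}(\ell) \geq 1$, so the hypothesis $\ell \geq 2^k$ from the lemma does not need to be carried over as an extra assumption.
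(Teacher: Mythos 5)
Your proposal is correct and is essentially identical to the paper's own (one-line) proof: apply part (a) of \cref{Bounds on ell-good graphs} to the numerator, part (b) together with the inclusion of $\ell$-good graphs in $\RainGraph[k](n)$ to the denominator, and divide. One small quibble: your closing remark that the bound is ``vacuous and thus still valid'' when $\delta_{k,n}(\ell) \geq 1$ is not quite right, since then $\delta_{k,n}(\ell)/(1-\delta_{k,n}(\ell))$ is negative or undefined and the stated inequality cannot hold --- the division step genuinely requires $1-\delta_{k,n}(\ell) > 0$ (a gap the paper's statement shares, as it also drops the hypothesis $\ell \geq 2^k$).
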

\begin{proof}
This follows immediately from \cref{Bounds on ell-good graphs} and the fact that there are at least as many $k$-rainbow threshold graphs as there are $\ell$-good $k$-rainbow threshold graphs. 
\end{proof}

In particular any $\epsilon$ we can find a sufficiently large $\ell_\epsilon$ such that there the fraction of non $\ell_\epsilon$-good graphs in $\RainGraph[k](n)$ is $< \epsilon$ whenever $n \geq \ell_\epsilon^2$.

\begin{proposition}
\label{Good k-rainbow graphs are not isomorphic to k-1-rainbow graphs}
Suppose $\floor{\frac{n}{\ell}} \geq (k+1) \cdot 2^{k+1}(k+1 + 2^{k+1})$. No $\ell$-good $k+1$-rainbow threshold graph is isomorphic to a $k$-rainbow threshold graph. 
\end{proposition}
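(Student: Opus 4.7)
The plan is to argue by contradiction. If $\cG$ were isomorphic to a $k$-rainbow threshold graph, then since the number of $\Neigh<\cG>[{[n]\setminus X}](X)$-classes depends only on the underlying graph (the relation is defined purely via edges), I would apply \cref{Upper bound on neighborhood equivalence classes} to such a representation and conclude that, for every $X\subseteq[n]$, this number of classes is at most $k\cdot 2^k(1+|X|/2)$. The strategy is then to exhibit an $X\subseteq[n]$ violating this bound. Write $N=(k+1)\cdot 2^{k+1}$ and $M=k+1+2^{k+1}$.

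Using $\ell$-goodness, I will construct $X=X_L\cup X_M\cup X_R$ with $|X|=2M$ as follows: take $X_L$ inside a single early $\ell$-interval to realize each color exactly once (so $S\rest[X_L]\in\AllColors<k+1>[n](X_L)$); take $X_R$ inside a single late $\ell$-interval to realize each $e$-value exactly once (so $S\rest[X_R]\in\AllSetsColors<k+1>[n](X_R)$); and take $X_M$ to consist of one element from each of $M$ distinct $\ell$-intervals strictly between $X_L$ and $X_R$, with consecutive chosen intervals separated by at least one intervening $\ell$-interval. The hypothesis $\lfloor n/\ell\rfloor\ge NM$ easily accommodates all of this.

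I will then count classes as follows. By \cref{All colors on the left imply you can determine equivalence of subsets on the right} and \cref{All sets of colors on the right imply you can determine equivalence of colors on the left}, any $z_1,z_2\in[n]\setminus X$ with $S(z_1)\ne S(z_2)$ are already distinguished by either $X_L$ or $X_R$ alone, so the $X$-neighborhood equivalence on $[n]\setminus X$ refines the partition $\{Q_s\}_s$ by $S$-value. Within each $Q_s$ the $X_L$- and $X_R$-contributions to a neighborhood are constant (both sit entirely on one side of every element of $Q_s$), so the $X$-equivalence on $Q_s$ coincides with the $X_M$-equivalence. Copying the incremental counting in the proof of \cref{Upper bound on neighborhood equivalence classes}, the number of $X_M$-equivalence classes inside $Q_s$ equals $|\{x\in X_M : x\text{ splits }s\}|+1$, provided $Q_s$ has a representative in each interval of $[n]$ cut out by those splitters; the spacing guarantees this, since each such interval contains a full $\ell$-interval whose elements by $\ell$-goodness include every $S$-value. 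Summing over $s$ and using (from the same upper-bound proof) that each $x\in X_M$ splits exactly $N/2$ of the $N$ values of $s$, the total class count equals $N(M/2+1)$.

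Finally, the desired inequality $N(M/2+1)>k\cdot 2^k(1+|X|/2)=k\cdot 2^k(M+1)$ reduces to $(k+1)(M+2)>k(M+1)$, i.e., $M+k+2>0$, which is trivial, yielding the contradiction. The main subtlety will be the "no merging" bookkeeping: cross-$s$ merging is ruled out by the two recovery lemmas applied to $X_L$ and $X_R$, while the within-$Q_s$ coverage required for the splitter-count formula follows from the $X_M$-spacing combined with $\ell$-goodness.
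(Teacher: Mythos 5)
Your proposal is correct and follows essentially the same route as the paper: build a witness set $X$ from an all-colors block, an all-sets-of-colors block, and a middle block spread across well-separated $\ell$-intervals, then use the two recovery lemmas together with the fact that each middle point splits exactly half of the $(k+1)\cdot 2^{k+1}$ possible $S$-values to lower-bound the number of $X$-neighborhood classes and contradict \cref{Upper bound on neighborhood equivalence classes}. The only real difference is that you take $|X|=2M$ of constant size, so the final comparison reduces to the trivial inequality $(k+1)(M+2)>k(M+1)$, whereas the paper lets $|X|$ grow to at least $(k+1)\cdot 2^{k+1}\cdot M$ and absorbs the constant term that way; both choices work.
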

\begin{proof}
Suppose $S \in \RainSeq[k+1](n)$ is an $\ell$-good $k+1$-rainbow threshold sequence and $\cG =\SeqToGraph(S)$. We must show that $\cG$ is not isomorphic to any $k$-rainbow threshold graph. 

By \cref{Upper bound on neighborhood equivalence classes} it suffices to find an $X \subseteq [n]$ such that $\Neigh<\cG>[{[n]\setminus X}](X)$ has more than $k \cdot 2^{k} \cdot (1 + \frac{|X|}{2})$-many equivalence classes. 

For $r \leq \floor{\frac{n}{\ell}} - 3$ let $I_r = [(r+1) \cdot \ell, (r+2) \cdot \ell)$.  Let $X \subseteq [n]$ be such that the following hold where $(x_i)_{i \in [t]}$ is an increasing enumeration of $X$. 

\begin{itemize}
\item[(a)] For $i \leq [k+1]$, $x_i \in I_0$. 

\item[(b)] For $i \leq [2^{k+1}]$, $x_{t-i} \in I_{n-3}$. 

\item[(c)] For $i \in [k+1, t - 2^{k+1})$, $x_i \in I_{2(i-k)}$. 

\item[(d)] $S \in \AllColors<k+1>[n](\{x_i\}_{i \in [k]})$ and $S \in \AllSetsColors<k+1>[n](\{x_i\}_{i \in [t-2^{k+1}, t)})$. 

\item[(e)] $\floor{\frac{n}{\ell}} + k-4 + 2^{k+1} \geq t \geq (k+1) \cdot 2^{k+1}(k+1 + 2^{k+1})$. 
\end{itemize}

Note such a sequence exists as $\floor{\frac{n}{\ell}} \geq (k+1) \cdot 2^{k+1}(k+1 + 2^{k+1})$ and $S$ is $\ell$-good. 

Let $Y = [x_{k+1}+1, x_{t - 2^k})$. For $j \in [t - k - 2^k]$ let 
\[
X_j = \{x_i\}_{i \in [k +1  + j]} \cup \{x_i\}_{i \in [t-2^{k+1}, t)}
\]

By \cref{All colors on the left imply you can determine equivalence of subsets on the right} and \cref{All sets of colors on the right imply you can determine equivalence of colors on the left} and conditions (a), (b) and (d) we hae that whenever $y_0, y_1 \in Y$ with $S(y_0) \neq S(y_1)$ then $\neg (y_0 \Neigh<S>[Y \setminus X](X_0) y_1)$. 

Therefore the $\Neigh<S>[Y \setminus X](X_0)$-neighborhood of an element determines, up to $\sim$, the value of $S$ on its elements. 

For $i \in [t - 2^k - k]$ let $r_i$ be the number of $\Neigh<S>[Y \setminus X](X_i)$-equivalence classes. 

Condition (c) ensures that for $i \in [k+1, t - 2^{k+1}-1)$ we have
\[
(\forall s \in [k+1] \times [2^{k+1}])\, S^{-1}(s) \cap [x_i + 1, x_{i+1}) \neq \emptyset. 
\]
Therefore, for $i \in [t- 2^{k+1} - k-2]$ we have $r_{i+1} - r_i = \frac{1}{2} \cdot (k+1) \cdot 2^{k+1}$. 
In particular this means that $\Neigh<S>[Y \setminus X](X)$ has $(k+1) \cdot 2^{k+1} + \frac{1}{2} \cdot (t - k -1- 2^{k+1}) \cdot (k+1) \cdot 2^{k+1}$-many equivalence classes. 

Hence $\Neigh<S>[{[n] \setminus X}](X)$ has at least $(k+1) \cdot 2^{k+1} + \frac{1}{2} \cdot (t - k -1- 2^{k+1}) \cdot (k+1) \cdot 2^{k+1}$-many equivalence classes. 
But 
\begin{align*}
[k+1 \cdot 2^{k+1}  & + \tfrac{1}{2} \cdot (t - k-1 - 2^{k+1}) \cdot (k+1) \cdot 2^{k+1}]  - [k \cdot 2^{k}\cdot (1 + \tfrac{1}{2} \cdot t)] \\
& = ((k+1) \cdot 2^{k+1} - k \cdot 2^{k}) \cdot (1 + \tfrac{1}{2} t) - (k+1) \cdot 2^{k}(k+1 + 2^{k+1})\\
& > (1 + \tfrac{1}{2} \cdot t) - (k+1) \cdot 2^{k}(k+1 + 2^{k+1}) \geq 0.
\end{align*}
Therefore $\Neigh<\cG>[{[n]\setminus X}](X)$ has more than $k \cdot 2^{k} \cdot (1 + \frac{|X|}{2})$-many equivalence classes as desired. 
\end{proof}

\begin{theorem}
Suppose $k, n \in \w$ where 
\[
\floor{\frac{n}{\ell}} \geq 2^{3k+3}
\]
and 
\[
n \geq \dfrac{-2^{4k+7}}{\log_2\bigg(1-\dfrac{1}{(k+1) \cdot 2^{k+1}}\bigg)}
\]
Then 
\[
\dfrac{ \{\cG \in \RainGraph[k+1](n) \st (\exists \cH \in \RainGraph[k](n))\, \cG \cong \cH\}}{\RainGraph[k+1](n)} \leq \bigg(\bigg(1-\dfrac{1}{(k+1) \cdot 2^{k+1}}\bigg)^\frac{1}{2^{3k+3}}\bigg)^n \cdot 2^{2^{3k+5}}
\]
\end{theorem}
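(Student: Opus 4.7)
The plan is to combine \cref{Good k-rainbow graphs are not isomorphic to k-1-rainbow graphs} (with the same $k$) with \cref{Bound on fraction of non-ell good graphs} (applied with $k+1$ in place of $k$). First, select an $\ell$ that simultaneously satisfies the hypothesis $\floor{n/\ell} \geq (k+1) \cdot 2^{k+1}(k+1+2^{k+1})$ of the former and satisfies $\ell \geq n/2^{3k+3}$ (up to a bounded-factor slack). Both can be arranged because of the elementary inequality $(k+1) \cdot 2^{k+1}(k+1+2^{k+1}) \leq 2^{3k+3}$ for all $k \geq 0$, together with the first hypothesis of the theorem. Applying \cref{Good k-rainbow graphs are not isomorphic to k-1-rainbow graphs}, no $\ell$-good graph in $\RainGraph[k+1](n)$ is isomorphic to any $k$-rainbow threshold graph, so
\[
\{\cG \in \RainGraph[k+1](n) \st (\exists \cH \in \RainGraph[k](n))\, \cG \cong \cH\} \subseteq \{\cG \in \RainGraph[k+1](n) \st \cG \text{ is not $\ell$-good}\}.
\]

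\cref{Bound on fraction of non-ell good graphs} applied with $k+1$ then bounds the fraction on the right by
\[
\frac{\delta_{k+1,n}(\ell)}{1-\delta_{k+1,n}(\ell)} \cdot ((k+1) \cdot 2^{k+1})^{k+1+2^{k+1}} \cdot (k+1)!,
\]
with $\delta_{k+1,n}(\ell) = \floor{n/\ell} \cdot (k+1) \cdot 2^{k+1} \cdot (1-1/((k+1) \cdot 2^{k+1}))^\ell$. Since $\ell \geq n/2^{3k+3}$, the exponential factor in $\delta_{k+1,n}(\ell)$ is at most $(1-1/((k+1) \cdot 2^{k+1}))^{n/2^{3k+3}} = ((1-1/((k+1) \cdot 2^{k+1}))^{1/2^{3k+3}})^n$, which is precisely the $n$-dependent factor in the target bound. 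The coefficient $\floor{n/\ell} \cdot (k+1) \cdot 2^{k+1} \leq 2^{3k+3} \cdot 2^{2k+2} = 2^{5k+5}$ contributes only a polynomial-in-$2^k$ factor. The second hypothesis, $n \geq -2^{4k+7}/\log_2(1-1/((k+1) \cdot 2^{k+1}))$, is calibrated so that $\delta_{k+1,n}(\ell) \leq 1/2$ (a direct calculation after taking logarithms), so $\delta/(1-\delta) \leq 2\delta$ and the factor $1/(1-\delta)$ is absorbed into the constant.

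It remains to verify that the combined constant prefactor is at most $2^{2^{3k+5}}$. Using $(k+1) \cdot 2^{k+1} \leq 2^{2k+2}$ and $k+1+2^{k+1} \leq 2^{k+2}$, the base-$2$ logarithm of the combinatorial term $((k+1) \cdot 2^{k+1})^{k+1+2^{k+1}} \cdot (k+1)!$ is at most $2^{2k+5}$, which together with $2^{5k+5}$, the factor $2$ from replacing $\delta/(1-\delta)$, and any constant-factor loss from the choice of $\ell$ is well below $2^{3k+5}$ for all $k \geq 1$; the case $k=0$ is vacuous, since $\RainGraph[0](n) = \emptyset$ for $n \geq 1$ and the left-hand side is $0$. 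The main obstacle is purely arithmetic: carefully tracking the constants, and in particular verifying that the somewhat idiosyncratic form of the second hypothesis on $n$ is exactly what is needed to force $\delta_{k+1,n}(\ell) \leq 1/2$. The conceptual content of the theorem is entirely captured by the two preceding results.
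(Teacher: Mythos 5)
Your proposal matches the paper's proof essentially step for step: reduce to bounding the non-$\ell$-good fraction via \cref{Good k-rainbow graphs are not isomorphic to k-1-rainbow graphs}, apply \cref{Bound on fraction of non-ell good graphs} at $k+1$, choose $\ell$ so that $\floor{n/\ell} = 2^{3k+3}$, use the second hypothesis to force $\delta_{k+1,n}(\ell)$ small enough that $\delta/(1-\delta)\leq 2\delta$, and absorb the remaining combinatorial prefactor into $2^{2^{3k+5}}$. The argument is correct and is the same as the paper's.
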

\begin{proof}
By \cref{Good k-rainbow graphs are not isomorphic to k-1-rainbow graphs} it suffices to show that there is an $\ell$ such that $\floor{\frac{n}{\ell}} \geq 2^{3k+3}\geq (k+1) \cdot 2^{k+1}\cdot (k + 1 + 2^{k+2})$ and 
\[
\dfrac{ \{\cG \in \RainGraph[k+1](n) \st \cG\text{ is not }\ell\text{-good}\}}{\RainGraph[k+!](n)} \leq \bigg(\bigg(1-\dfrac{1}{(k+1) \cdot 2^{k+1}}\bigg)^\frac{1}{2^{3k+3}}\bigg)^n \cdot 2^{2^{3k+5}}
\]

But by \cref{Bound on fraction of non-ell good graphs} it suffices to find an $\ell$ such that 
\[
\dfrac{\delta}{1-\delta} \cdot ((k+1) \cdot 2^{k+1})^{k+1 + 2^{k+1}} \cdot (k+1)! \leq \epsilon
\]
where $\delta = \floor{\frac{n}{\ell}}\cdot (k+1) \cdot 2^{k+1} \cdot \bigg(1-\dfrac{1}{(k+1) \cdot 2^{k+1}}\bigg)^\ell$. 

Let $\ell$ be such that $2^{3k+3} \cdot \ell \leq n < (2^{3k+3} +1) \cdot \ell$. Then $\floor{\frac{n}{\ell}} = 2^{3k+3}$. We then have 

\begin{align*}
\delta & = 2^{3k+3}\cdot (k+1) \cdot 2^{k+1} \cdot \bigg(1-\dfrac{1}{(k+1) \cdot 2^{k+1}}\bigg)^\ell \\
& \leq 2^{5k+4} \cdot \bigg(1-\dfrac{1}{(k+1) \cdot 2^{k+1}}\bigg)^{\frac{n}{2^{3k+3} +1}} \\
& \leq 2^{5k+4} \cdot \bigg(1-\dfrac{1}{(k+1) \cdot 2^{k+1}}\bigg)^{\frac{n}{2^{3k+4}}}
\end{align*}

Therefore 
\begin{align*}
\log_2(\delta) & \leq 5k+4 + \frac{n}{2^{3k+4}} \cdot \log_2\bigg(1-\dfrac{1}{(k+1) \cdot 2^{k+1}}\bigg) \\
& \leq 5k+4 + \frac{-2^{4k+7}}{\log_2\bigg(1-\dfrac{1}{(k+1) \cdot 2^{k+1}}\bigg)} \cdot  \frac{1}{2^{3k+4}} \cdot \log_2\bigg(1-\dfrac{1}{(k+1) \cdot 2^{k+1}}\bigg) \\
& \leq 5k+4  - 2^{4k+7} \cdot  \frac{1}{2^{3k+4}} \leq  5k+4  - 2^{k+3} < 0. 
\end{align*}
Therefore $\delta < 1$ and $\frac{\delta}{1- \delta} \leq 2 \cdot \delta$. 

Therefore
\begin{align*}
\dfrac{\delta}{1-\delta} & \cdot ((k+1) \cdot 2^{k+1})^{k+1 + 2^{k+1}} \cdot (k+1)!  \leq \delta \cdot 2^{2^{3k+4}} \\
& =  \bbfloor{\frac{n}{\ell}}\cdot (k+1) \cdot 2^{k+1} \cdot \bigg(1-\dfrac{1}{(k+1) \cdot 2^{k+1}}\bigg)^\ell \cdot 2^{2^{3k+4}} \\
& \leq \bigg(1-\dfrac{1}{(k+1) \cdot 2^{k+1}}\bigg)^\ell \cdot 2^{2^{3k+5}}\\
& \leq \bigg(1-\dfrac{1}{(k+1) \cdot 2^{k+1}}\bigg)^\frac{n}{2^{3k+3}} \cdot 2^{2^{3k+5}}  \\
& = \bigg(\bigg(1-\dfrac{1}{(k+1) \cdot 2^{k+1}}\bigg)^\frac{1}{2^{3k+3}}\bigg)^n \cdot 2^{2^{3k+5}}
\end{align*}
as desired. 
\end{proof}
In particular we have the following corollary. 
\begin{corollary}
Suppose $k, n \in \w$.
Then 
\[
\lim_{n \to \infty}\dfrac{ \{\cG \in \RainGraph[k](n) \st (\exists \cH \in \RainGraph[k-1](n))\, \cG \cong \cH\}}{\RainGraph[k](n)}  = 0.
\]
\end{corollary}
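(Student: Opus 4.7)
The plan is to apply the preceding theorem with $k$ replaced by $k-1$ (assuming $k \geq 1$; the statement is trivial if $k = 0$ since both rainbow classes coincide with the unique graph on $[n]$, or we can simply start the induction at $k = 1$). That theorem gives, for all $n$ satisfying the stated hypotheses, a bound of the form
\[
\dfrac{|\{\cG \in \RainGraph[k](n) \st (\exists \cH \in \RainGraph[k-1](n))\, \cG \cong \cH\}|}{|\RainGraph[k](n)|} \leq c_k^{\,n} \cdot C_k,
\]
where
\[
c_k = \bigg(1-\dfrac{1}{k \cdot 2^{k}}\bigg)^{\!\!1/2^{3k}} \qquad\text{and}\qquad C_k = 2^{2^{3k+2}}.
\]

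The key observation is that, for fixed $k$, both $c_k$ and $C_k$ depend only on $k$ and not on $n$. Since $0 < 1 - \frac{1}{k \cdot 2^{k}} < 1$, we have $0 < c_k < 1$, and hence $c_k^{\,n} \to 0$ as $n \to \infty$; multiplication by the finite constant $C_k$ preserves this, so $c_k^{\,n} \cdot C_k \to 0$.

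The only remaining point is that the hypotheses of the previous theorem (the two lower bounds on $n$ and $\lfloor n/\ell \rfloor$) are in force only for sufficiently large $n$. But this is immediate: both conditions amount to $n$ being larger than some explicit constant depending only on $k$, so they hold for all but finitely many $n$, which does not affect the limit. I expect no real obstacle here, as the work has already been done in the theorem; this corollary is essentially a reindexing together with the observation that the right-hand bound decays geometrically in $n$.
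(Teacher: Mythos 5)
Your proposal is correct and matches the paper's (implicit) argument: the corollary is stated as an immediate consequence of the preceding theorem, obtained by reindexing $k+1 \mapsto k$ and observing that the bound is a constant times a geometrically decaying factor in $n$, with the theorem's hypotheses holding for all sufficiently large $n$. The constants $c_k$ and $C_k$ you extracted are the right ones, and your handling of the small-$k$ and small-$n$ edge cases is fine.
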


\section{$0$-$1$ Law}

We now end by showing that for no $k$ do the collection of $k$-rainbow threshold graphs satisfy a $0$-$1$ law. 

\begin{theorem}
There is a single sentence $\varphi$ of first order logic in the language of graphs such that for all $k \geq 1$
\[
\lim_{n \to \infty} \dfrac{|\{\cG \in \RainGraph[k](n) \st \cG \models \varphi\}|}{|\RainGraph[k](n)|} \not \in \{0, 1\}.
\]
\end{theorem}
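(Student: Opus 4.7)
Take $\varphi := \exists v\, \forall w\, \bigl(w = v \lor \neg E(v,w)\bigr)$, asserting that $\cG$ has an isolated vertex. The plan is to show that for every $k \geq 1$ and all sufficiently large $n$, both $\Pr_n(\cG \models \varphi)$ and $\Pr_n(\cG \not\models \varphi)$ are bounded below by a positive constant $c_k$ depending only on $k$, where $\Pr_n$ denotes the uniform measure on $\RainGraph[k](n)$; this immediately gives $\lim_n \Pr_n(\cG \models \varphi) \in [c_k, 1-c_k] \subset (0,1)$.

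The key step is to construct two maps $\iota_0, \iota_1 \colon \RainGraph[k](n-1) \to \RainGraph[k](n)$ whose images witness $\varphi$ and $\neg\varphi$, respectively. Given $\cG' \in \RainGraph[k](n-1)$ with any representing sequence $(a', e') \in \RainSeq[k](n-1)$, define $\iota_0(\cG')$ by extending the sequence to $[n]$ with $a(n-1) = 0$ and $e(n-1) = \emptyset$, and define $\iota_1(\cG')$ by extending it with $a(n-1) = 0$ and $e(n-1) = [k]$. In the first case vertex $n-1$ is adjacent to no predecessor, so $\iota_0(\cG')$ is $\cG'$ with an appended isolated vertex and thus satisfies $\varphi$; in the second case vertex $n-1$ is adjacent to every predecessor, so $\iota_1(\cG')$ is $\cG'$ with an appended universal vertex, forcing every vertex of the extended graph to have degree at least one, whence $\iota_1(\cG') \not\models \varphi$. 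Both maps are injective since $\iota_j(\cG')|_{[n-1]} = \cG'$, and for $n \geq 2$ their images are disjoint because a vertex cannot simultaneously be isolated and universal. Consequently $|\{\cG \in \RainGraph[k](n) : \cG \models \varphi\}|$ and $|\{\cG \in \RainGraph[k](n) : \cG \not\models \varphi\}|$ are each at least $|\RainGraph[k](n-1)|$, which yields $\Pr_n(\cG \models \varphi),\ \Pr_n(\cG \not\models \varphi) \geq \rho_n := |\RainGraph[k](n-1)|/|\RainGraph[k](n)|$.

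It remains to bound $\rho_n$ below by a positive constant depending only on $k$. The trivial upper bound $|\RainGraph[k](n)| \leq |\RainSeq[k](n)| = (k\cdot 2^k)^n$, combined with \cref{Bounds on ell-good graphs}(b) applied at $m = n-1$ with $\ell = \ell(n)$ chosen to grow slowly enough that $\delta_{k,n-1}(\ell) \to 0$ (giving $|\RainGraph[k](n-1)| \geq (k\cdot 2^k)^{n-1}/(2 (k\cdot 2^k)^{k+2^k} \cdot k!)$ for all large $n$), yields $\rho_n \geq 1/(2 (k\cdot 2^k)^{k+2^k+1} \cdot k!) =: c_k > 0$. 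Therefore $\Pr_n(\cG \models \varphi) \in [c_k, 1-c_k]$ for all sufficiently large $n$, and the limit lies strictly in $(0,1)$.

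The main obstacle is the careful verification of the two extensions: that both extended sequences genuinely represent valid $k$-rainbow threshold graphs, and that $\iota_1(\cG')$ really has no isolated vertex (using $n \geq 2$ together with the definition of $\SeqToGraph$ via $a(x) \in e(y)$). Once this is in place, the lower bound on $\rho_n$ is a mechanical invocation of the $\ell$-good counting bound already established earlier in the paper.
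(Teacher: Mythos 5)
Your proposal is correct and follows essentially the same route as the paper: the same sentence $\varphi$ (existence of an isolated vertex), the same device of appending a final vertex with $e = \emptyset$ versus $e = [k]$ to force $\varphi$ or $\neg\varphi$, and the same underlying count of graphs obtained from \cref{All colors on the left imply you can determine equivalence of subsets on the right} and \cref{All sets of colors on the right imply you can determine equivalence of colors on the left} (which you reach via \cref{Bounds on ell-good graphs}(b)). The only difference is packaging: you phrase the extension as a pair of injections $\RainGraph[k](n-1) \to \RainGraph[k](n)$ and bound the ratio $|\RainGraph[k](n-1)|/|\RainGraph[k](n)|$ below by a constant, whereas the paper works directly with the special family $A$ of sequences and counts its images; both yield the same $n$-independent positive lower bound on each proportion.
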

\begin{proof}
Let $\varphi$ be the sentence $(\exists x)(\forall y)\, \neg E(x, y)$, i.e. that there is an isolated vertex. Let $\psi$ be the sentence $(\exists x)(\forall y)\,x \neq y \rightarrow E(x, y)$. Note we cannot have both $\varphi$ and $\psi$ holding in any graph of size at least $2$. Fix $k \geq 1$.

Let $S^- \in \AllColors<k>[k](k)$ and $S^+ \in \AllSetsColors<k>[{[n-2^k, n)}]({[n-2^k,n)})$. Let 
\[
A = \{S \in \RainSeq[k](n) \st S^-, S^+ \subseteq S\}.
\]
By \cref{All colors on the left imply you can determine equivalence of subsets on the right} and \cref{All sets of colors on the right imply you can determine equivalence of colors on the left} $|\SeqToGraph``[A]| = \frac{(k \cdot 2^k)^{n - k - 2^k}}{k!}$

For $s \in \Powerset([k])$ let $B_s = \{(a, e) \in \RainSeq[k](n+1) \st (a, e)\rest[n] \in A\text{ and }e(n) = s\}$. 

Note $|B_\emptyset| = |B_{[k[}| = |\SeqToGraph``[A]| = \frac{(k \cdot 2^k)^{n - k - 2^k}}{k!} = \frac{(k \cdot 2^k)^{n - k - 2^k}}{k!}$. 

Therefore we have 
\begin{align*}
\dfrac{|\{\cG \in \RainGraph[k](n+1) \st \cG \models \varphi\}|}{|\RainGraph[k](n+1)|} & \geq \dfrac{(k \cdot 2^k)^{n - k - 2^k}}{k!\cdot (k \cdot 2^k)^n} \\
& = \dfrac{1}{k!\cdot (k \cdot 2^k)^{k + 2^k}} \geq 0
\end{align*}
and 
\begin{align*}
\dfrac{|\{\cG \in \RainGraph[k](n+1) \st \cG \models \psi\}|}{|\RainGraph[k](n+1)|} & \geq \dfrac{(k \cdot 2^k)^{n - k - 2^k}}{k!\cdot (k \cdot 2^k)^n} \\
& = \dfrac{1}{k!\cdot (k \cdot 2^k)^{k + 2^k}} \geq 0
\end{align*}
Therefore 
\[
\lim_{n \to \infty} \dfrac{|\{\cG \in \RainGraph[k](n) \st \cG \models \varphi\}|}{|\RainGraph[k](n)|} \not \in \{0, 1\}
\]
as desired. 
\end{proof}

\bibliography{bibliography}

\begin{thebibliography}{1}

\bibitem{MR1686154}
A.~Brandst\"{a}dt, V.~B. Le, and J.~P. Spinrad.
\newblock {\em Graph classes: a survey}.
\newblock SIAM Monographs on Discrete Mathematics and Applications. Society for
  Industrial and Applied Mathematics (SIAM), Philadelphia, PA, 1999.

\bibitem{MR479384}
V.~Chv\'{a}tal and P.~L. Hammer.
\newblock Aggregation of inequalities in integer programming.
\newblock In {\em Studies in integer programming ({P}roc. {W}orkshop, {B}onn,
  1975)}, Ann. Discrete Math., Vol. 1, pages 145--162. North-Holland,
  Amsterdam-New York-Oxford, 1977.

\bibitem{MR2573956}
P.~Diaconis, S.~Holmes, and S.~Janson.
\newblock Threshold graph limits and random threshold graphs.
\newblock {\em Internet Math.}, 5(3):267--320 (2009), 2008.

\bibitem{MR488948}
P.~B. Henderson and Y.~Zalcstein.
\newblock A graph-theoretic characterization of the {${\rm PV}_{{\rm chunk}}$}
  class of synchronizing primitives.
\newblock {\em SIAM J. Comput.}, 6(1):88--108, 1977.

\bibitem{MR1417258}
N.~V.~R. Mahadev and U.~N. Peled.
\newblock {\em Threshold graphs and related topics}, volume~56 of {\em Annals
  of Discrete Mathematics}.
\newblock North-Holland Publishing Co., Amsterdam, 1995.

\end{thebibliography}
 \bibliographystyle{abbrv}

\end{document}